\providecommand{\tightlist}{%
  \setlength{\itemsep}{0pt}\setlength{\parskip}{0pt}}
\renewcommand{\phi}{\varphi}
\renewcommand{\bar}{\overline}
\newcommand{\A}{\mathcal{A}}
\newcommand{\B}{\mathcal{B}}
\newcommand{\mbf}[1]{\mathbf{#1}}
\newcommand{\mf}[1]{\mathfrak{#1}}
\newcommand{\mc}[1]{\mathcal{#1}}
\newcommand{\ra}{\rightarrow}
\begin{document}
\title{Positive enumerable functors}
\author{Barbara F.\ Csima\inst{1} \and Dino Rossegger\orcidID{0000-0003-3494-9049}\inst{2} \and Daniel
Yu\inst{3}}
\institute{Department of Pure Mathematics, University of Waterloo, Canada
  \email{csima@uwaterloo.ca} \and Department of Pure Mathematics, University of
  Waterloo, Canada \email{dino.rossegger@uwaterloo.ca} \and University of
Waterloo, Canada \email{zy3yu@uwaterloo.ca}}
\maketitle
\begin{abstract}
  We study reductions well suited to compare structures and
  classes of structures with respect to properties based on enumeration
  reducibility. We introduce the notion of a positive enumerable functor and
  study the relationship with established reductions based on functors and
  alternative definitions.
\end{abstract}
\section{Introduction}
In this article we study notions of reductions that let us compare classes of
structures with respect to their computability theoretic properties.
Computability theoretic reductions between classes of structures can be
formalized using effective versions of the category theoretic notion of
a functor. While computable functors have already been used in the 80's by
Goncharov~\cite{goncharov1980}, the formal investigation of this
notion was only started recently after R.\ Miller, Poonen, Schoutens, and
Shlapentokh~\cite{miller2018} explicitly used a computable functor to obtain
a reduction from the class of graphs to the class
of fields. Their result shows that fields are universal with respect to many
properties studied in computable structure theory.


In~\cite{rossegger2017} the third author studied effective versions of functors
based on enumeration reducibility and their relation to notions of
interpretability. There, it was shown that the existence of a computable
functor implies the existence of an enumerable functor effectively isomorphic
to it. In that article there also appeared an unfortunately incorrect claim that enumerable functors are
equivalent to a variation of effective interpretability, a notion equivalent to
computable functors~\cite{harrison-trainor2017}.
Indeed, it was later shown in
Rossegger's thesis~\cite{rosseggerthesis}, that the existence of a computable
functor implies the existence of an enumerable functor and thus enumerable
functors are equivalent to the original notion. Hence, enumerable functors are equivalent to the original
version of effective interpretability. In this paper we provide a simple proof of the latter result. 
It is not very surprising that enumerable and computable functors are
equivalent, as the enumeration operators
witnessing the effectiveness of an enumerable functor are given access to the
atomic diagrams of structures, which are total sets.

The main objective of this article is the study of positive enumerable functors,
an effectivization of functors that grants the involved enumeration operators access to
the positive diagrams of structures instead of their atomic diagrams. While
computable functors are well suited to compare structures with
respect to properties related to relative computability and the Turing
degrees, positive enumerable functors provide the right framework to compare structures with respect to
their enumerations and properties related to the enumeration
degrees.

The paper is organized as follows. In \cref{sec:defandfirstres}, we first show that computable functors and
enumerable functors are equivalent, and then begin the study of positive
enumerable functors and reductions based on them. We show that reductions by
positive enumerable bi-transformations preserve enumeration degree spectra,
a generalization of degree spectra considering all enumerations of a structure
introduced by Soskov~\cite{soskov2004}.
We then exhibit an example consisting of two structures which are computably
bi-transformable but whose enumeration degree spectra are different. This
implies that positive enumerable functors and computable functors are
independent notions. Towards the end of the section we compare different
possible definitions of positive enumerable functors
and extend our results to
reductions between arbitrary classes of structures.

\section{Computable and enumerable functors}\label{sec:defandfirstres}
In this article we assume that our structures are in a relational language
$(R_i)_{i\in\omega}$ where each $R_i$ has arity $a_i$ and the map $i\mapsto
a_i$ is computable. We furthermore only consider countable structures with universe $\omega$.
We view classes of structures as categories where the objects are
structures in a given language $\mc L$ and the morphisms are isomorphisms
between them. Recall that a functor $F:\mf C\ra \mf D$ maps structures from $\mf
C$ to structures in $\mf D$ and maps isomorphisms $f:\A\ra\B$ to
$F(f):F(\A)\ra F(\B)$ preserving composition and identity.

The smallest classes we consider are isomorphism classes of a single structure $\A$,
\[ Iso(\A)=\{ \B: \B\cong\A\}.\]
We will often talk about a functor from $\A$ to $\B$, $F:\A\ra \B$ when we mean
a functor $F:Iso(\A)\ra Iso(\B)$.
Depending on the properties that we want our functor to preserve we may use
different effectivizations, but they will all be of the following form.
Generally, an effectivization of a functor $F:\mf C\ra\mf D$ will consist of a pair of operators
$(\Phi,\Phi_*)$ and a suitable coding $C$ such that
\begin{enumerate}
  \item for all $\A\in \mf C$, $\Phi(C(\A))=C(F(\A))$,
  \item for all $\A,\B\in\mf C$ and $f\in Hom(\A,\B)$,
    $\Phi(C(\A),C(f),C(\B))= C(F(f))$.
\end{enumerate}
In this article the operators will either be enumeration or Turing operators.
If the coding is clear from context we will omit the coding function, i.e., we
write $\Phi(\A)$ instead of $\Phi(C(\A))$. The most
common coding in computable structure theory is the following.

\begin{definition}\label{def:atomicdiagram}
  Let $\A$ be a structure in relational language ${(R_i)}_{i\in \omega}$. Then the
  \emph{atomic diagram} $D(\A)$ of $\A$ is the set
  \[ \bigoplus_{i\in\omega} R_i^{\A}\oplus \bigoplus_{i\in\omega}\neg{R_i^{\A}}.\]
\end{definition}
In the literature one can often find different definitions of the atomic
diagram. It is easy to show that all of these notions are Turing and
enumeration equivalent. The reason why we chose this definition is that it is
conceptually easier to define the positive diagram and deal with enumerations
of structures like this. We are now ready to define various effectivizations of
functors.
\begin{definition}[\cite{miller2018},\cite{harrison-trainor2017}]\label{def:computablefunctor}
  A functor $F:\mf C\ra\mf D$ is \emph{computable} if there is a pair of Turing
  operators $(\Phi,\Phi_*)$ such that for all $\A,\B\in\mf C$
  \begin{enumerate}\tightlist%
    \item $\Phi^{D(\A)}=D(F(\A))$,
    \item for all $f\in Hom(\A,\B)$, $\Phi^{D(\A)\oplus Graph(f)\oplus
      D(\B)}=F(f)$.
  \end{enumerate}
\end{definition}
\begin{definition}\label{def:enumerablefunctor}
  A functor $F:\mf C\ra\mf D$ is \emph{enumerable} if there is a pair
  $(\Psi,\Psi_*)$ where $\Psi$ and $\Psi_*$ are enumeration
  operators such that for all $\A,\B\in\mf C$
  \begin{enumerate}\tightlist%
    \item $\Psi^{D(\A)}=D(F(\A))$,
    \item for all $f\in hom(\A,\B)$, $\Psi_*^{D(\A)\oplus Graph(f)\oplus
      D(\B)}=Graph(F(f))$.
  \end{enumerate}
\end{definition}
In~\cite{rossegger2017} enumerable functors were defined differently, using
a Turing operator instead of an enumeration operator for the homomorphisms.
The definition was as follows.
\begin{definition}[\cite{rossegger2017}]\label{def:starenumerablefunctor}
  A functor $F:\mf C\ra\mf D$ is \emph{$\star$-enumerable} if there is a pair
  $(\Psi,\Phi_*)$ where $\Psi$ is an enumeration operator and $\Phi_*$ is a Turing
  operator such that for all $\A,\B\in\mf C$
  \begin{enumerate}\tightlist%
    \item $\Psi^{D(\A)}=D(F(\A))$,
    \item for all $f\in hom(\A,\B)$, $\Phi_*^{D(\A)\oplus Graph(f)\oplus
      D(\B)}=Graph(F(f))$.
  \end{enumerate}
\end{definition}
It turns out that the two definitions are equivalent and we will thus stick
with \cref{def:enumerablefunctor} which seems to be more natural.
\begin{proposition}\label{prop:starenumiffenum}
  A functor $F:\A\ra\B$ is enumerable if and only if it is
  $\star$-enumerable.
\end{proposition}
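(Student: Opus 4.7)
I plan to show each direction separately.

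For the direction \emph{$\star$-enumerable implies enumerable}, suppose $F$ is witnessed by $(\Psi, \Phi_*)$; I would construct an enumeration operator $\Psi_*$ that simulates $\Phi_*$. The key observation is that from any oracle of the form $X = D(\A) \oplus Graph(f) \oplus D(\B)$ one can enumeration-reducibly obtain its complement $\bar X$: by the definition of the atomic diagram, $\langle 0,i,\bar a\rangle \notin D(\A)$ is witnessed by $\langle 1,i,\bar a\rangle \in D(\A)$ and vice versa, and since $f$ is a bijection, $(x,y) \notin Graph(f)$ is witnessed by any $(x,z) \in Graph(f)$ with $z \neq y$; codes that do not encode valid atomic facts or pairs are handled trivially. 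Having $X \oplus \bar X$ enumerable from $X$ permits the standard simulation of a Turing operator by an enumeration operator: $\Psi_*$ enumerates $\langle x,y\rangle$ precisely when it finds a finite string $\sigma$ consistent with the oracle such that $\Phi_*^\sigma(\langle x,y\rangle) = 1$.

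For the converse \emph{enumerable implies $\star$-enumerable}, suppose $F$ is witnessed by $(\Psi, \Psi_*)$. I would let $\Phi_*$ be the Turing operator which, on oracle $X$ and input $\langle x,y\rangle$, runs the enumeration $\Psi_*^X = Graph(F(f))$ until the first pair of the form $(x,z)$ appears and then outputs $1$ if $z = y$ and $0$ otherwise. Since $F(f)\colon \omega \ra \omega$ is a bijection (as an isomorphism of structures with universe $\omega$), such a pair is unique and will eventually be enumerated, so the procedure halts and returns the correct value.

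The only real subtlety is the first direction, where one needs the particular structure of the oracle to c.e.-recover its negative information. This is where the design of $D(\A)$, which contains positive codes for negative atomic facts, and the functional nature of $Graph(f)$ are both essential — the analogous conversion would fail for, say, a functor whose morphism data is an arbitrary binary relation rather than the graph of a bijection.
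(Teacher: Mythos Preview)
Your proposal is correct and follows essentially the same approach as the paper. For the forward direction both you and the paper simply search the enumeration of $Graph(F(f))$ for the unique image of each $x$; for the backward direction the paper explicitly builds the axioms of $\Psi_*$ from convergent computations of $\Phi_*$, while you package the same idea as the observation that $\overline{X}\leq_e X$ uniformly for $X=D(\A)\oplus Graph(f)\oplus D(\B)$ and then invoke the standard Turing-to-enumeration simulation --- this is a cleaner formulation of exactly the mechanism the paper spells out by hand.
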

\begin{proof}
Say we have an enumerable
functor given by $(\Psi,\Psi_*)$ and an isomorphism $f:\tilde\A\ra \hat \A$
for $\tilde \A\cong\hat\A\cong \A$.
We can compute the isomorphism $F(f)$ by enumerating $Graph(F(f))$ using
$\Psi_*^{\tilde\A\oplus
f\oplus \hat \A}$. For every $x$ we are guaranteed to enumerate
$(x,y)\in Graph(F(f))$ for some $y$ as the domain of $\A$ is $\omega$. This
is uniform in $\tilde\A$, $f$ and $\hat\A$. Thus there is a Turing operator $\Phi_*$
such that $(\Psi,\Phi_*)$ witnesses that $F$ is $\star$-enumerable.

Now, say $F$ is $\star$-enumerable as witnessed by $(\Psi,\Phi_*)$.
For every $\sigma,x,y$ with $\Phi^\sigma_*(x)\downarrow=y$ such that $\sigma$ can be split into $\sigma_0\oplus
\sigma_1\oplus\sigma_2$ where $\sigma_0$, $\sigma_2$ are partial characteristic
functions of finite structures in a finite sublanguage $L$ of the language of $\A$
and $\sigma_1$ is the partial graph of a function, consider the set
\begin{align*}X^{x,y}_\sigma=\{&(B\oplus Graph(\tau)\oplus C,\langle x,y\rangle): B, C
    \text{ are atomic diagrams of finite }\\
    & L\text{-structures}, B\text{ compatible with }\sigma_0,\,
    C\text{ compatible with }\sigma_2,\\
    & \sigma_1(u,v)=1\ra \tau(u)=v, \text{ and } \sigma_1(u,v)=0\ra \tau(u)=z\\
    & \text{where } z\not\in range(\sigma_1)
\}.\end{align*}
We can now define our enumeration operator as
$\Psi_\star=\bigcup_{x,y,\sigma:\Phi_\star^\sigma(x)\downarrow=y} X^{x,y}_\sigma$. Given an enumeration of
$\Phi_*$ we can produce an enumeration of $\Psi_*$, so $\Psi_*$ is
c.e. It remains to show that $\Psi_*^{\hat\A\oplus f\oplus \tilde
\A}=\Phi_*^{\hat \A\oplus f\oplus \tilde \A}$.

Say $\Phi_*^{\tilde\A\oplus f\oplus \hat \A}(x)=y$. Then there is
$\sigma\preceq \tilde\A\oplus f\oplus \hat \A$ such that $(\sigma,x,y)\in
\Phi_*$ and thus by the construction of $X_{\sigma}$ there is
$B\subseteq D(\tilde\A)$, $C\subseteq D(\hat\A)$ and $Graph(\tau)\subseteq
Graph(f)$ such that $(B\oplus Graph(\tau)\oplus C, \langle
x,y\rangle)\in X_\sigma$. Thus $\langle x,y\rangle\in\Psi_*^{\tilde\A\oplus
f\oplus \hat \A}$.

On the other hand say $\langle x,y\rangle\in\Psi_*^{\tilde\A\oplus
f\oplus \hat \A}$. Then, there is $(B\oplus Graph(\tau)\oplus
C,\langle x,y\rangle)\in \Psi_*$ with $B\subseteq \tilde
\A$, $Graph(\tau)\subseteq Graph(f)$ and $C\subseteq \hat\A$.
Furthermore, there is $\sigma\preceq \chi_{B\oplus Graph(\tau)\oplus
C}$ such that $(\sigma,x,y)\in \Phi_*$. Thus $\Psi_*^{\tilde\A\oplus f\oplus
\hat \A}=Graph(F(f))$ for any $\hat \A\cong \tilde \A\cong \A$ and
$f:\tilde\A\cong\hat\A$ and hence $F$ is enumerable.
\qed\end{proof}

In~\cite{rossegger2017} it was shown that the existence of an enumerable
functor implies the existence of a computable functor and
in~\cite{rosseggerthesis} the converse was shown. We give a simple
proof of the latter.
\begin{theorem}\label{thm:compimpenum}
  If $F:\A\ra\B$ is a computable functor, then it is enumerable.
\end{theorem}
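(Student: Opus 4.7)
The plan is to exploit that both $D(\A)$ and $Graph(f)$, for $f$ a bijection of $\omega$, are self-complementing: from an enumeration of either set one can uniformly enumerate its complement. For $D(\A)$ as in \cref{def:atomicdiagram}, each index coding ``$R_i(\bar a)$'' has a canonical partner coding ``$\neg R_i(\bar a)$'', and exactly one of each such pair lies in $D(\A)$, so the appearance of the partner in the enumeration certifies non-membership. For $Graph(f)$, bijectivity gives $(x,y)\notin Graph(f)$ iff some $(x,y')$ with $y'\ne y$, or some $(x',y)$ with $x'\ne x$, is enumerated. Joins of self-complementing sets are again self-complementing, uniformly, so $D(\A)\oplus Graph(f)\oplus D(\B)$ enjoys the same property.

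Granting this, I would use the folklore trick of simulating a Turing reduction from a self-complementing set by an enumeration reduction. Define $\Psi$ by putting $(y,E)\in\Psi$ iff $E$ is a finite set from which one can extract a finite partial characteristic function $\sigma$ -- setting $\sigma(x)=1$ for $x\in E$ and $\sigma(x)=0$ when the partner of $x$ is in $E$ -- such that $\Phi^{\sigma}(y)\downarrow=1$. Computability of the partner map makes $\Psi$ c.e. To verify $\Psi^{D(\A)}=D(F(\A))$: for $y\in D(F(\A))=\Phi^{D(\A)}$ the use principle yields a finite $\sigma\preceq\chi_{D(\A)}$ with $\Phi^\sigma(y)=1$, and the positive part of $\sigma$ together with the partners of its negative part gives a witnessing $E\subseteq D(\A)$; conversely, any $E\subseteq D(\A)$ producing such a $\sigma$ gives a $\sigma$ compatible with $\chi_{D(\A)}$, so $\Phi^{D(\A)}(y)=1$ by monotonicity.

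The construction of $\Psi_*$ from $\Phi_*$ is analogous: the input $D(\A)\oplus Graph(f)\oplus D(\B)$ is self-complementing by the join observation, so running the same simulation on $\Phi_*$ and enumerating $\langle x,y\rangle$ whenever some finite $E$ in the input extracts a $\sigma$ with $\Phi_*^{\sigma}(x)\downarrow=y$ produces an enumeration operator with output $Graph(F(f))$. The main obstacle is purely bookkeeping -- making the partner map explicit for the coded triple join and checking that the ``extract $\sigma$ from $E$'' map is computable -- which is routine given how $D(\A)$ and the graph of an isomorphism are coded.
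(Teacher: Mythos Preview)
Your proof is correct and rests on the same core observation as the paper's: the atomic diagram in the sense of \cref{def:atomicdiagram} is \emph{self-complementing} (each position has a computable partner and exactly one of the two lies in $D(\A)$), so a Turing reduction from $D(\A)$ can be simulated by an enumeration reduction. Your construction of $\Psi$ is essentially identical to the paper's.

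The one organizational difference is in how the morphism operator is handled. The paper does not build $\Psi_*$ directly: it simply keeps the Turing operator $\Phi_*$, observes that $(\Psi,\Phi_*)$ witnesses $\star$-enumerability, and then quotes \cref{prop:starenumiffenum} to upgrade to enumerability. You instead note that $Graph(f)$ for a bijection $f$ is also self-complementing, hence so is the join $D(\A)\oplus Graph(f)\oplus D(\B)$, and you run the same simulation on $\Phi_*$ to produce $\Psi_*$ in one step. Your route is more self-contained and avoids the auxiliary $\star$-notion; the paper's route reuses work already done in \cref{prop:starenumiffenum} (whose harder direction, $\star$-enumerable $\Rightarrow$ enumerable, is close in spirit to what you do for $\Psi_*$ anyway). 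Neither buys anything mathematically over the other.
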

\begin{proof}
 Given a computable functor $F$ we will show that $F$ is $\star$-enumerable.
 That $F$ is then also enumerable follows from \cref{prop:starenumiffenum}.

 Let $D(L_\mathcal{A})$ be the collection of finite atomic diagrams in the language of $\mathcal{A}$.
 To every $p\in D(L_\mathcal{A})$ we associate a finite string
 $\alpha_p$ in the alphabet $\{0,1,\uparrow\}$ so that if $p$ specifies that
 $R_i$ holds on elements coded by $u$, then we set that $\neg R_i$ does not
 hold on these elements. More formally, $\alpha_p(x)=1$ if $x\in
 p$, $\alpha_p(x)=0$  if $x=2\langle i, u\rangle$ and $2\langle i,
 u\rangle+1\in p$ or $x=2\langle i,u\rangle +1$ and $2\langle i, u\rangle \in
 p$
 , and $\alpha_p(x)=\uparrow$ if $x$ is less than
 the largest element of $p$ and none of the other cases fits. We also
 associate a string $\tilde\alpha_p\in 2^{|\alpha_p|}$ with $p$ where $\tilde \alpha_p(x)=1$
 if and only if $\alpha_p(x)=1$ and $\tilde \alpha_p(x)=0$ if and only if $\alpha_p(x)=0$ or
 $\alpha_p(x)\uparrow$.

 Let the computability of $F$ be witnessed by $(\Phi,\Phi_*)$. We build the
 enumeration operator $\Psi$ as follows. For every $p\in D(L_\A)$ and every
 $x$ if $\Phi^{\tilde\alpha_p}(x)\downarrow=1$ and every call to the oracle
 during the computation is on an element $z$ such that $\alpha_p(z)\neq
 \uparrow$, then enumerate $(p,x)$ into $\Psi$. This finishes the construction
 of $\Psi$.

 Now, let $\hat \A\cong \A$. We have that $x\in \Psi^{\hat \A}(x)$ if and only if there exists
 $p\in D(L_\A)$ such that $p \subseteq D(\hat \A)$ and $(p,x)\in \Psi$.
 We further have that $(p,x)\in\Psi$ if and only if
 $\Phi^{\tilde\alpha_p}(x)\downarrow=1$ if and only if $\Phi^{\hat\A}(x)=1$.
 Thus $F$ is enumerable using $(\Psi,\Phi_*)$.
\qed\end{proof}

Combining \cref{thm:compimpenum} with the results from~\cite{rossegger2017} we
obtain that enumerable functors and computable functors
defined using the atomic diagram of a structure as input are
equivalent notions. This is not surprising. After all, the atomic diagram of
a structure
always has total enumeration degree and there is a canonical isomorphism between
the total enumeration degrees and the Turing degrees.
In order to make this equivalence precise we need another definition.

\begin{definition}[\cite{harrison-trainor2017}]
A functor $F: \mathfrak{C} \to \mathfrak{D}$ is  \emph{effectively isomorphic} to a functor
    $G: \mathfrak{C} \to \mathfrak{D}$ if there is a Turing functional $\Lambda$ such that for any
    $\mathcal{A} \in  \mathfrak{C}$, $\Lambda^\mathcal{A} : F(\mathcal{A}) \to G(\mathcal{A})$ is an isomorphism.
Moreover, for any morphism $h\in Hom(\mathcal{A}, \mathcal{B})$ in $\mathfrak{C}$,
    $\Lambda^\mathcal{B} \circ F(h) = G(h) \circ \Lambda^\mathcal{A}$.
That is, the diagram below commutes.
\begin{center}
  \begin{tikzpicture}
    \node (FA) at (0,2) {$F(\mathcal{A})$};
    \node (FB) at (0,0) {$F(\mathcal{B})$};
    \node (GA) at (3,2) {$G(\mathcal{A})$};
    \node (GB) at (3,0) {$G(\mathcal{B})$};

    \draw[->] (FA) -- node[above] {$\Lambda^\mathcal{A}$} (GA);
    \draw[->] (FB) -- node[above] {$\Lambda^\mathcal{B}$} (GB);
    \draw[->] (FA) -- node[left] {$F(h)$} (FB);
    \draw[->] (GA) -- node[right] {$G(h)$} (GB);
\end{tikzpicture}
\end{center}
\end{definition}
The following is an immediate corollary of \cref{thm:compimpenum} and~\cite[Theorem 2]{rossegger2017}.
  \begin{theorem}\label{thm:enumiffcomp}
  Let $F:\A\ra \B$ be a functor. Then $F$ is computable if and only if there is
  an enumerable functor $G:\A\ra\B$ effectively isomorphic to $F$.
\end{theorem}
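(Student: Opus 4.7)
The plan is to combine \cref{thm:compimpenum} with the result of~\cite{rossegger2017} that every enumerable functor gives rise to an effectively isomorphic computable one.

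For the forward direction, suppose $F$ is computable. By \cref{thm:compimpenum}, $F$ is itself enumerable. Taking $G = F$ and letting $\Lambda$ be the Turing functional whose output is constantly the identity on $\omega$ produces an enumerable $G$ effectively isomorphic to $F$, with all naturality squares collapsing to identities.

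For the reverse direction, assume there is an enumerable functor $G : \A \to \B$ effectively isomorphic to $F$ via a Turing functional $\Lambda$. By~\cite[Theorem 2]{rossegger2017} there exist a computable functor $H : \A \to \B$ and a Turing functional $\Xi$ witnessing that $G$ is effectively isomorphic to $H$. Pointwise composition ${\Lambda'}^{\mathcal{C}} := \Xi^{\mathcal{C}} \circ \Lambda^{\mathcal{C}}$ is uniformly Turing computable from $D(\mathcal{C})$, so $\Lambda'$ is a Turing functional making $F$ effectively isomorphic to the computable functor $H$. Computability of $H$ then transfers to $F$: to compute $D(F(\mathcal{C}))$ from $D(\mathcal{C})$, one first computes $D(H(\mathcal{C}))$ via the computable witness for $H$, then translates each query about $R_i^{F(\mathcal{C})}(\vec x)$ into the corresponding query about $R_i^{H(\mathcal{C})}({\Lambda'}^{\mathcal{C}}(\vec x))$. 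Morphisms are handled analogously, by conjugating $H(h)$ with the appropriate components of $\Lambda'$ and its inverse, which is uniformly Turing computable from ${\Lambda'}^{\mathcal{C}}$ because ${\Lambda'}^{\mathcal{C}}$ is a bijection on $\omega$.

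The only substantive verification is that effective isomorphism is transitive and that computability of a functor passes along it. Both amount to routine uniform oracle manipulation, confirming that the statement is indeed immediate from the two cited results.
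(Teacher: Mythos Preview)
Your proposal is correct and follows the same route the paper takes: the paper simply declares the theorem an immediate corollary of \cref{thm:compimpenum} and~\cite[Theorem 2]{rossegger2017}, and your argument is a faithful unpacking of exactly that---take $G=F$ with the identity functional for the forward direction, and for the converse compose the two effective isomorphisms and pull the computable witness for $H$ back along $\Lambda'$. The only content you add beyond the paper's one-line proof is the explicit check that computability transfers along an effective isomorphism, which is indeed routine and handled correctly.
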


\begin{definition}[\cite{harrison-trainor2017}]
Suppose $F: \mathfrak{C} \to \mathfrak{D}$, $G: \mathfrak{D} \to \mathfrak{C}$ are functors such that
    $G \circ F$ is effectively isomorphic to $Id_\mathfrak{C}$ via the Turing functional
    $\Lambda_\mathfrak{C}$ and $F \circ G$ is effectively isomorphic to $Id_\mathfrak{D}$
    via the Turing functional $\Lambda_\mathfrak{D}$.
If furthermore, for any $\mathcal{A} \in \mathfrak{C}$ and $\mathcal{B} \in \mathfrak{D}$,
    $\Lambda_\mathfrak{D}^{F(\mathcal{A})}= F(\Lambda_\mathfrak{C}^{\mathcal{A}}) :
    F(\mathcal{A}) \to F(G(F(\mathcal{A})))$ and
    $\Lambda_\mathfrak{C}^{G(\mathcal{B})}=
    G(\Lambda_\mathfrak{D}^{\mathcal{B}}) : G(\mathcal{B}) \to G(F(G(\mathcal{B})))$,
    then $F$ and $G$ are said to be \emph{pseudo inverses}.
\end{definition}
\begin{definition}[\cite{harrison-trainor2017}]\label{def:compbitrans}
  Two structures $\A$ and $\B$ are \emph{computably bi-transformable} if there
  are computable pseudo-inverse functors $F:\A\ra\B$ and $G:\B\ra\A$.
\end{definition}
If the functors in \cref{def:compbitrans} are enumerable instead
of computable then we say that $\A$ and $\B$ are \emph{enumerably
bi-transformable}. As an immediate corollary of \cref{thm:enumiffcomp} we
obtain the following.
\begin{corollary}\label{cor:bitransenumiffcomp}
  Two structures $\A$ and $\B$ are enumerably bi-transformable if and only if
  they are computably bi-transformable.
\end{corollary}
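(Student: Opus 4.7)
The plan is to observe that \cref{cor:bitransenumiffcomp} is an essentially immediate consequence of \cref{thm:compimpenum} and \cref{thm:enumiffcomp}: these results let us upgrade effectiveness witnesses in both directions while leaving the underlying set-theoretic functors and the Turing functionals $\Lambda_\A, \Lambda_\B$ appearing in the pseudo-inverse definition untouched.

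For the forward direction, suppose $F:\A\ra\B$ and $G:\B\ra\A$ are computable pseudo-inverses with Turing functionals $\Lambda_\A$ and $\Lambda_\B$. By \cref{thm:compimpenum}, the very same functors $F$ and $G$ are also enumerable. The pseudo-inverse conditions --- effective isomorphism of $G\circ F$ with $Id_{Iso(\A)}$, of $F\circ G$ with $Id_{Iso(\B)}$, and the compatibility equalities $\Lambda_\B^{F(\A)}=F(\Lambda_\A^\A)$ and $\Lambda_\A^{G(\B)}=G(\Lambda_\B^\B)$ --- involve only the functors $F, G$ and the Turing functionals $\Lambda_\A, \Lambda_\B$, so they continue to hold verbatim. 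Hence $\A$ and $\B$ are enumerably bi-transformable.

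For the converse, suppose $F, G$ are enumerable pseudo-inverses. The key observation is that any enumerable functor $F$ is effectively isomorphic to itself via the identity Turing functional $\Lambda^\A(x)=x$, which requires no oracle and commutes trivially with morphisms. Applying \cref{thm:enumiffcomp} to $F$ --- with the enumerable functor in the theorem taken to be $F$ itself --- yields that $F$ is computable, and the same reasoning shows $G$ is computable. As before, the Turing functionals $\Lambda_\A, \Lambda_\B$ and the compatibility identities they satisfy are unchanged when we switch to the Turing operators witnessing computability of $F$ and $G$, so $F, G$ remain pseudo-inverses and $\A, \B$ are computably bi-transformable. There is no substantive obstacle in either direction: the arithmetic content is entirely contained in \cref{thm:enumiffcomp}, and the only point to note is that the pseudo-inverse conditions are set-theoretic equalities between isomorphisms and therefore do not depend on which operators witness the effectiveness of $F$ and $G$.
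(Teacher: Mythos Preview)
Your proposal is correct and matches the paper's approach: the paper simply records the corollary as an immediate consequence of \cref{thm:enumiffcomp}, and your argument is precisely the unpacking of that---computable functors are already enumerable by \cref{thm:compimpenum}, while an enumerable functor is trivially effectively isomorphic to itself, so \cref{thm:enumiffcomp} forces it to be computable; in both directions the pseudo-inverse data $\Lambda_\A,\Lambda_\B$ are untouched since they depend only on the underlying functors.
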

\section{Effectivizations using positive diagrams}\label{sec:positivefunctors}
We now turn our attention to the setting where we only have positive
information about the structures.
We follow Soskov~\cite{soskov2004} in our definitions. See
also the survey paper by Soskova and Soskova~\cite{soskova2017} on computable structure theory and enumeration degrees.
\begin{definition}\label{def:positivediagram}
  Let $\A$ be a structure in relational language ${(R_i)}_{i\in\omega}$. The
  \emph{positive diagram of $\A$}, denoted by $P(\A)$, is the set
  \[ =\oplus \neq\oplus\bigoplus_{i\in\omega} R_i^{\A}.\]
\end{definition}
We are interested in the degrees of enumerations of $P(\A)$. To be more precise
let $f$ be an enumeration of $\omega$ and for $X\subseteq \omega^n$ let
\[ f^{-1}(X)=\{ \langle x_1,\dots,x_n\rangle: ( f(x_1),\dots,
f(x_n))\in X\}.\]
Given $\A$ let $f^{-1}(\A)=f^{-1}(=)\oplus f^{-1}(\neq)\oplus f^{-1}(R_0^{\A})\oplus
\dots$. Notice that if $f=id$, then $f^{-1}$ is just the positive diagram of $\A$.
\begin{definition}\label{def:enumerationspectrum}
  The \emph{enumeration degree spectrum} of $\A$ is the set
    \[ eSp(\A)=\{ d_e(f^{-1}(\A)): f \text{ is an enumeration of } \omega\}.\]
  If $\mathbf a$ is the least element of $eSp(\A)$, then $\mathbf a$ is called the
  \emph{enumeration degree} of $\A$.
\end{definition}
In order to obtain a notion of reduction that preserves enumeration spectra we
need an effectivization of functors where we use positive
diagrams of structures as coding. It is clear that for computable
functors this makes no difference as $P(\A)\equiv_T D(\A)$. For enumerable
functors it does make a difference. We also need to replace the Turing operators in the definition of pseudo
inverses with enumeration operators. The new notions are as follows.
\begin{definition}\label{def:positiveenumerablefunctor}
  A functor $F:\mf C\ra\mf D$ is \emph{positive enumerable} if there is a pair
  $(\Psi,\Psi_*)$ where $\Psi$ and $\Psi_*$ are enumeration
  operators such that for all $\A,\B\in\mf C$
  \begin{enumerate}\tightlist%
    \item $\Psi^{P(\A)}=P(F(\A))$,
    \item for all $f\in hom(\A,\B)$, $\Psi_*^{P(\A)\oplus Graph(f)\oplus
      P(\B)}=Graph(F(f))$.
  \end{enumerate}
\end{definition}
\begin{definition}
A functor $F: \mathfrak{C} \to \mathfrak{D}$ is  \emph{enumeration isomorphic} to a functor
    $G: \mathfrak{C} \to \mathfrak{D}$ if there is an enumeration operator $\Lambda$ such that for any
    $\mathcal{A} \in  \mathfrak{C}$, $\Lambda^{P(\mathcal{A})} : F(\mathcal{A}) \to G(\mathcal{A})$ is an isomorphism.
Moreover, for any morphism $h\in Hom(\mathcal{A}, \mathcal{B})$ in $\mathfrak{C}$,
$\Lambda^{P(\mathcal{B})} \circ F(h) = G(h) \circ \Lambda^{P(\mathcal{A})}$.
%
\end{definition}
\begin{definition}
Suppose $F: \mathfrak{C} \to \mathfrak{D}$, $G: \mathfrak{D} \to \mathfrak{C}$ are functors such that
    $G \circ F$ is enumeration isomorphic to $Id_\mathfrak{C}$ via the
    enumeration operator
    $\Lambda_\mathfrak{C}$ and $F \circ G$ is enumeration isomorphic to $Id_\mathfrak{D}$
    via the enumeration operator $\Lambda_\mathfrak{D}$.
If, furthermore, for any $\mathcal{A} \in \mathfrak{C}$ and $\mathcal{B} \in \mathfrak{D}$,
$\Lambda_\mathfrak{D}^{P(F(\mathcal{A}))}=
F(\Lambda_\mathfrak{C}^{P(\mathcal{A})}) :
    F(\mathcal{A}) \to F(G(F(\mathcal{A})))$ and
    $\Lambda_\mathfrak{C}^{P(G(\mathcal{B}))}=
    G(\Lambda_\mathfrak{D}^{P(\mathcal{B})}) : G(\mathcal{B}) \to G(F(G(\mathcal{B})))$,
    then $F$ and $G$ are said to be \emph{enumeration pseudo inverses}.
\end{definition}
\begin{definition}\label{def:posenumbitrans}
  Two structures $\A$ and $\B$ are \emph{positive enumerably bi-transform-able} if there
  are positive enumerable enumeration pseudo-inverse functors $F:\A\ra\B$ and $G:\B\ra\A$.
\end{definition}
\begin{theorem}
  Let $\A$ and $\B$ be positive enumerably bi-transformable. Then
  $eSp(\A)=eSp(\B)$.
\end{theorem}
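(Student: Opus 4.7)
By symmetry, it suffices to show $eSp(\A) \subseteq eSp(\B)$. Let $(\Psi,\Psi_*)$ and $(\Psi',\Psi'_*)$ witness positive enumerability of $F:\A\ra\B$ and $G:\B\ra\A$, and let $\Lambda_{\mf C}, \Lambda_{\mf D}$ witness the enumeration isomorphisms $G\circ F\sim Id_{\mf C}$, $F\circ G\sim Id_{\mf D}$. Fix an enumeration $f:\omega\to\omega$ realizing some $\mathbf d = d_e(f^{-1}(\A))\in eSp(\A)$. The plan is to prove $f^{-1}(\A)\equiv_e f^{-1}(F(\A))$; since $F(\A)\cong\B$ and enumeration spectra are invariant under isomorphic copies of $\B$, this places $\mathbf d$ in $eSp(\B)$.

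The forward direction $f^{-1}(F(\A))\leq_e f^{-1}(\A)$ proceeds by ``schematically'' applying $\Psi$ to $f^{-1}(\A)$. Viewing $f^{-1}(\A)$ as the positive diagram of the pullback pre-structure on $\omega$ with equality $f^{-1}(=)$, and using that each axiom $(D,y)\in\Psi$ must apply uniformly over all $\hat{\A}\cong\A$ so that $\Psi^{P(\hat{\A})}=P(F(\hat{\A}))$, one argues that $\Psi$ (or a suitable modification) enumerates exactly $f^{-1}(F(\A))$ from $f^{-1}(\A)$: each finite fragment of $f^{-1}(\A)$ matches the hypothesis of some axiom via an $f$-pullback, and the corresponding output belongs to $f^{-1}(F(\A))$. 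The reverse direction $f^{-1}(\A)\leq_e f^{-1}(F(\A))$ proceeds analogously via $\Psi'$, yielding $f^{-1}(G(F(\A)))\leq_e f^{-1}(F(\A))$; the pseudo-inverse equation $\Lambda_{\mf D}^{P(F(\A))}=F(\Lambda_{\mf C}^{P(\A)})$ then lets us extract, from $f^{-1}(F(\A))$, enough information about the enumeration isomorphism $\Lambda_{\mf C}^{P(\A)}:\A\ra G(F(\A))$ to identify $f^{-1}(G(F(\A)))$ with $f^{-1}(\A)$ $e$-reducibly.

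The principal obstacle is justifying the schematic lift: the defining property of $\Psi$ only specifies its behavior on genuine positive diagrams of structures in $Iso(\A)$, while $f^{-1}(\A)$ is generally not such a diagram when $f$ fails to be injective. The argument hinges on the local nature of enumeration operators---each output fact is witnessed by a finite fragment of input---together with the observation that any finite fragment of $f^{-1}(\A)$ is consistent with an actual positive diagram of some $\hat{\A}\cong\A$. This forces the output of $\Psi$ on $f^{-1}(\A)$ to respect the same functorial relationship as on genuine inputs. Making this argument fully rigorous requires careful bookkeeping of how $\Psi$'s axioms interact with the non-trivial equivalence $f^{-1}(=)$, and of how the pseudo-inverse identities propagate $e$-reductions through compositions $G\circ F$ when the resulting structure differs from the original as a literal subset of $\omega$.
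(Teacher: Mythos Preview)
Your overall strategy coincides with the paper's: show $f^{-1}(\A)\equiv_e f^{-1}(F(\hat\A))$ for a suitable copy, iterate through $G$, and close the cycle via the enumeration isomorphism. The genuine gap is in your handling of the ``schematic lift''. You propose to feed $f^{-1}(\A)$ directly into $\Psi$ and justify this by claiming that any finite fragment of $f^{-1}(\A)$ is consistent with the positive diagram of some actual copy of $\A$. That claim is false as stated: whenever $f$ is non-injective, $f^{-1}(=)$ contains a pair $(x,y)$ with $x\neq y$, and no genuine positive diagram (where $=$ is literal equality on $\omega$) can contain such a pair. So the axioms of $\Psi$ triggered by $f^{-1}(\A)$ need not behave functorially, and $\Psi^{f^{-1}(\A)}$ could enumerate garbage.

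The paper avoids this by never applying $\Psi$ to $f^{-1}(\A)$. Instead it first passes to the quotient $f^{-1}(\A)/f^{-1}(=)$, realized as an honest structure $\hat\A\cong\A$ on $\omega$ via the canonical enumeration of least representatives, with $P(\hat\A)\leq_e f^{-1}(\A)$. Now $\Psi$ is applied to the legitimate input $P(\hat\A)$, yielding $P(F(\hat\A))$. The set $f^{-1}(F(\hat\A))$ is then recovered by ``re-inflating'' along the $f^{-1}(=)$-classes: for each $R_i(a_1,\dots,a_n)\in P(F(\hat\A))$ one enumerates $R_i(b_1,\dots,b_n)$ for all $b_j$ in the $a_j$-th equivalence class. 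This gives $f^{-1}(F(\hat\A))\leq_e P(F(\hat\A))\oplus f^{-1}(\A)\leq_e f^{-1}(\A)$. The reverse reduction is obtained by iterating through $G$ and then using the enumeration isomorphism $i:G(F(\hat\A))\to\hat\A$ to observe $(f\circ i)^{-1}(G(F(\hat\A)))=f^{-1}(\A)$. The quotient-apply-reinflate maneuver is the missing idea in your proposal; once you have it, the ``careful bookkeeping'' you anticipate becomes routine.
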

\begin{proof}
  Say $\A$ and $\B$ are positive enumerably bi-transformable by $F:\A\ra\B$
  and $G:\B\ra\A$. Let $f$ be an arbitrary enumeration of $\omega$, then,
  viewing $f^{-1}(\A)/f^{-1}(=)$ as a structure on $\omega$ by pulling back
    a canonical
    enumeration of the least elements in its $=$-equivalence classes, we have
    that there is $\hat \A\cong \A$ such that $P(\hat \A)=f^{-1}(\A)/f^{-1}(=)$ and $P(\hat\A)\leq_e
  f^{-1}(\A)$. As $F$ is positive enumerable we have that $f^{-1}(\A)\geq_e
  P(F(\hat \A))$.
  Furthermore, we shall see that $f^{-1}(F(\hat\A))\leq_e f^{-1}(\A)$ and that
  $f^{-1}(\A)/f^{-1}(=)=P(F(\hat\A))$.
  Given an enumeration of $f^{-1}(\A)$ and an enumeration of
  $P(F(\hat\A))$, we may first order the equivalence classes of $f^{-1}(=)$ by
  their least elements and then, if $R_i(a_1,\dots,a_n)\in P(F(\hat\A))$ we
  enumerate $R_i(b_1,\dots,b_n)$ for all $b_1,\dots,b_n\in\omega$ such that $b_j$ is in
  the $a_j$\textsuperscript{th} equivalence class of $f^{-1}(=)$. It is not
  hard to see that this gives an enumeration of a set $X$ such that $f^{-1}(=)\oplus f^{-1}(\neq)\oplus
  X=f^{-1}(F(\hat\A))$, that $f^{-1}(F(\hat\A))/f^{-1}(=)=P(F(\hat\A))$, and since by construction $f^{-1}(F(\hat\A))\leq_e P(F(\hat\A))\oplus f^{-1}(\A)$ we have $f^{-1}(F(\hat\A))\leq_e f^{-1}(\A)$.

  We can apply the same argument with $G$ in place of $F$ and $F(\hat\A)$ in
  place of $\A$ to get that $f^{-1}(G(F(\hat\A)))/f^{-1}(=)=P(G(F(\hat\A)))$ and
  \[ f^{-1}(G(F(\hat \A)))\leq_e f^{-1}(F(\hat\A))\leq_e f^{-1}(\A).\]
  At last, recall that, as $\A$ and $\B$ are positive enumerably
  bi-transformable, there is an enumeration operator $\Psi$ such that
    $\Psi^{P(G(F(\hat \A)))}$ is the enumeration of the graph of an isomorphism
  $i:G(F(\hat\A))\cong\hat \A$. But then $(f\circ i)^{-1}(G(F(\hat
  \A)))=f^{-1}(\A)$ and
  \[ f^{-1}(\A)\leq_e f^{-1}(G(F(\hat \A)))\leq_e f^{-1}(F(\hat\A))\leq_e f^{-1}(\A).\]
  This shows that $eSp(\A)\subseteq eSp(\B)$. The proof that $eSp(\B)\subseteq
  eSp(\A)$ is analogous.
\qed\end{proof}
\begin{proposition}\label{prop:compfunctors_noeSp}
  There are computably bi-transformable structures $\A$ and $\B$ such that
  $eSp(\A)\neq eSp(\B)$. In particular, $\A$ and $\B$ are not positive
  enumerably bi-transformable.
\end{proposition}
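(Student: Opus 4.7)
The plan is to take a c.e.\ non-computable set $A \subseteq \omega$ (so that $d_e(A) = 0_e$ while $d_e(\bar A) > 0_e$) and to work in the relational language $(R_i)_{i \in \omega}$ with every $R_i$ unary. I define $\A$ on the universe $\omega$ by setting $R_i^\A = \omega$ when $i \in A$ and $R_i^\A = \emptyset$ otherwise, and define $\B$ similarly using $\bar A$ in place of $A$. Since every predicate is interpreted as either all of $\omega$ or as $\emptyset$, every permutation of $\omega$ is an automorphism of each structure, and $Iso(\A) = \{\A\}$, $Iso(\B) = \{\B\}$ as singletons.

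To verify computable bi-transformability I would define $F: Iso(\A) \to Iso(\B)$ and $G: Iso(\B) \to Iso(\A)$ so that they exchange $\A$ and $\B$ on objects and act as the identity on morphisms (which in either case form $Sym(\omega)$). The pseudo-inverse conditions hold because $G \circ F$ and $F \circ G$ are literally the identity functors on the respective categories. Computability is immediate: $D(\A) \equiv_T D(\B)$ via the fixed Turing reduction that swaps the positive and negative parts of the atomic diagram, and on morphisms $F$ just copies the middle of its oracle since $F(f) = f$ when viewed as a permutation of $\omega$.

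The enumeration spectra differ because the positive diagram is asymmetric in $A$ and $\bar A$. Taking $f = id$, $id^{-1}(\A)$ is enumeration-equivalent to $A$, which is c.e., so $0_e \in eSp(\A)$. For $\B$ and any enumeration $f$ of $\omega$, the relation-component $\bigoplus_i f^{-1}(R_i^\B)$ of $f^{-1}(\B)$ equals $\bar A \times \omega$ under the standard coding, so $f^{-1}(\B) \geq_e \bar A$ and hence $d_e(f^{-1}(\B)) \geq d_e(\bar A) > 0_e$. Consequently $0_e \notin eSp(\B)$, and $eSp(\A) \neq eSp(\B)$.

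The only subtle point is verifying that the isomorphism classes are singletons, which follows immediately because any isomorphism between copies of $\A$ (or of $\B$) must preserve each relation set-wise, and each relation is either all of $\omega$ or $\emptyset$. I do not foresee any real obstacle beyond this bookkeeping; the construction is essentially as trivial as possible, and that triviality is exactly what makes the computable functors available while the positive side still sees the distinction between $A$ and $\bar A$.
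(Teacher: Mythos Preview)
Your proof is correct. The core idea is the same as the paper's---pick a c.e.\ non-computable set and build one structure from the set and the other from its complement, so that the atomic diagrams are Turing-equivalent (giving the computable bi-transformation) while the positive diagrams sit in different enumeration degrees---but the implementation differs. The paper uses a finite language: it takes $\A=(\omega,\underline 0,s,K)$ and $\B=(\omega,\underline 0,s,\bar K)$, so the successor structure makes every copy rigid and the functor just complements the last predicate. You instead push the set into the \emph{indices} of infinitely many unary predicates, which collapses each isomorphism class to a single object (with full automorphism group $Sym(\omega)$); this makes the functorial and pseudo-inverse conditions completely trivial since $G\circ F$ is literally the identity, and the uniformity over copies is vacuous. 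The trade-off is that your example essentially requires an infinite language, whereas the paper's example lives in a finite one; on the other hand, your version needs no rigidity argument and no check that the functor behaves correctly on nontrivial copies.
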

\begin{proof}
  Let $\A=(\omega,\underline 0,s, K)$ where $s$ is the successor relation
  on $\omega$, $\underline 0$ the first element, and $K$ the membership
  relation of the halting set. Assume
  $\B=(\omega,\underline 0,s,\bar K)$ is defined as $\A$
  except that $\bar K(x)$ if and only if $\neg K(x)$. There is a computable
  functor $F:\A \to \B$ taking $\hat \A=(\omega, \underline 0^{\hat{\A}},
  s^{\hat{\A}}, K^{\hat{\A}})\cong \A$ to $F(\hat\A)=(\omega,
  \underline 0^{\hat{\A}},
  s^{\hat{\A}}, \neg K^{\hat{\A}})$ and acting as the identity on
  isomorphisms. Furthermore, $F$ has a computable inverse and thus $\A$
is computably bi-transformable to $\B$.

  However, $\A$ has enumeration degree $\mbf{0}'_e$ and $\B$
has enumeration degree $\bar{\mbf{0}'}_e$. Thus there cannot be a positive
  enumerable functor from $\B$ to $\A$.
\qed\end{proof}
The following shows that computable functors and positive enumerable functors
are independent.
\begin{proposition}\label{prop:posenumnotcomp}
  There are structures $\A$ and $\B$ such that $\A$ is positive enumerably
  bi-transformable with $\B$ but $\A$ is not computably bi-transformable with
  $\B$.
\end{proposition}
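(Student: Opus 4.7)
The plan is to exploit the gap between enumeration and Turing reducibility: pick structures whose positive diagrams carry the same enumeration-theoretic content but whose atomic diagrams have strictly different Turing-degree spectra. Take $\A=(\omega,\underline 0,s)$ and $\B=(\omega,\underline 0,s,K)$, where $s$ is the successor relation, $\underline 0$ a unary predicate naming $0$, and $K$ the halting set as a unary predicate. Both structures are rigid, and for any $\hat\A\cong\A$ (resp.\ $\hat\B\cong\B$) there is a unique isomorphism $\phi_{\hat\A}:\hat\A\to\A$ (resp.\ $\phi_{\hat\B}:\hat\B\to\B$), and $\phi_{\hat\A}$ is uniformly computable from $s^{\hat\A}$ and $\underline 0^{\hat\A}$.

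For the positive-enumerable bi-transformation I define $F:\A\ra\B$ by $F(\hat\A)=(\omega,\underline 0^{\hat\A},s^{\hat\A},\phi_{\hat\A}^{-1}(K))$ with $F(f)=f$, and let $G:\B\ra\A$ be the reduct functor that forgets the $K$-predicate, with $G(f)=f$. Since $s^{\hat\A}$ is the graph of a total function, $\phi_{\hat\A}$ can be enumerated from $P(\hat\A)$, and composing with a c.e.\ enumeration of $K$ yields an enumeration of $\phi_{\hat\A}^{-1}(K)$; this makes $F$ positive enumerable, while $G$ is trivially so. Rigidity forces $K^{\hat\B}=\phi_{\hat\B}^{-1}(K)$ for every $\hat\B\cong\B$, so $F\circ G$ and $G\circ F$ are literally the identity functors on both objects and morphisms. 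Consequently the enumeration-pseudo-inverse conditions are immediate with $\Lambda_\mf C$ and $\Lambda_\mf D$ taken to be the identity enumeration operator, and the commutativity diagrams in the definition collapse to trivialities.

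To see that $\A$ and $\B$ are not computably bi-transformable, it suffices to rule out any computable functor $F':\A\ra\B$. If such an $F'$ existed, witnessed by a Turing operator $\Phi$, then applying it to the standard (computable) copy of $\A$ yields $D(F'(\A))=\Phi^{D(\A)}$, which is computable. But for every copy $\hat\B\cong\B$, the computable-from-$D(\hat\B)$ isomorphism $\phi_{\hat\B}$ together with $K^{\hat\B}\subseteq D(\hat\B)$ computes $K$, so $D(\hat\B)\geq_T\mathbf 0'$. This contradicts $F'(\A)\cong\B$ and proves that no computable functor $\A\ra\B$, and hence no computable bi-transformation between $\A$ and $\B$, can exist.

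The main conceptual step is the use of rigidity, which makes the compositions strictly equal to the identity and thereby sidesteps what would otherwise be the principal obstacle, namely verifying the rather delicate commutativity conditions in the definition of enumeration pseudo-inverses for a non-trivial enumerable natural isomorphism. The remaining work — checking that $\phi_{\hat\A}^{-1}(K)$ is enumerable from $P(\hat\A)$, and that the Turing spectrum of $\B$ lies above $\mathbf 0'$ — is routine once rigidity is in place.
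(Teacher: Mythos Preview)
Your proof is correct and follows essentially the same approach as the paper: the paper uses the very same pair of structures $(\omega,\underline 0,s,K)$ and $(\omega,\underline 0,s)$, only with the labels $\A$ and $\B$ interchanged, and rules out a computable functor in the hard direction by the identical degree-spectrum argument. Your write-up is considerably more detailed than the paper's terse ``it is not hard to see''---in particular, your explicit description of $F$ via $\phi_{\hat\A}^{-1}(K)$ and the verification that $F\circ G$ and $G\circ F$ are literally the identity (using rigidity) fill in exactly what the paper leaves implicit.
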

\begin{proof}
  Let $\A$ be as in \cref{prop:compfunctors_noeSp}, i.e.,
  $\A=(\omega,\underline 0,s, K)$ and $\B=(\omega, \underline 0,s)$. Then it is
  not hard to see that $\A$ is positive enumerably bi-transformable with
  $\B$. However, there can not be a computable functor from $\B$ to $\A$ as
  $\B$ has Turing degree $\mbf 0$ and $\A$ has Turing degree $\mbf 0'$.
\qed\end{proof}

We have seen in \cref{prop:starenumiffenum} that $\star$-enumerable functors
and enumerable functors are equivalent. Positive enumerable functors also admit
a different definition.
\begin{definition}\label{def:posstarenum}
  A functor $F:\mf C\ra\mf D$ is \emph{positive $\star$-enumerable} if there is a pair
  $(\Psi,\Phi_*)$ where $\Psi$ is an enumeration operator and $\Phi_*$ is a Turing
  operator such that for all $\A,\B\in\mf C$
  \begin{enumerate}\tightlist%
    \item $\Psi^{P(\A)}=P(F(\A))$,
    \item for all $f\in hom(\A,\B)$, $\Phi_*^{P(\A)\oplus Graph(f)\oplus
      P(\B)}=Graph(F(f))$.
  \end{enumerate}
\end{definition}
\begin{proposition}
  Every positive enumerable functor is positive $\star$-enumerable.
\end{proposition}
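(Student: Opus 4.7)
The plan is to mimic the easy direction of \cref{prop:starenumiffenum}, with $P$ in place of $D$. Suppose $F$ is positive enumerable as witnessed by $(\Psi,\Psi_*)$. The only thing to produce is a Turing operator $\Phi_*$ satisfying clause~(2) of \cref{def:posstarenum}; the enumeration operator $\Psi$ already satisfies clause~(1), and can be carried over unchanged.

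To build $\Phi_*$, I would proceed as follows. On input $x$ and oracle $P(\A)\oplus Graph(f)\oplus P(\B)$, simulate the enumeration of $\Psi_*^{P(\A)\oplus Graph(f)\oplus P(\B)}$ and wait for the first pair $(x,y)$ to appear; then output $y$. The key point that makes this into a total Turing reduction is that $F(f)$ is an isomorphism of structures with universe $\omega$, so $Graph(F(f))$ is the graph of a total function, and in particular for every $x\in\omega$ some pair $(x,y)$ must eventually be enumerated into $\Psi_*^{P(\A)\oplus Graph(f)\oplus P(\B)}=Graph(F(f))$. Since the graph is functional, this first $y$ is necessarily the correct value $F(f)(x)$.

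This procedure is uniform in $\A$, $f$, and $\B$, so it is given by a fixed Turing operator $\Phi_*$, and by construction $\Phi_*^{P(\A)\oplus Graph(f)\oplus P(\B)}=F(f)$, whose graph is $Graph(F(f))$. Therefore $(\Psi,\Phi_*)$ witnesses that $F$ is positive $\star$-enumerable.

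There is no real obstacle here; the argument is formally identical to the forward direction of \cref{prop:starenumiffenum}, and the switch from atomic diagrams to positive diagrams plays no role because neither the search procedure nor its justification uses any negative information about $\A$ or $\B$. Note that the converse (which would be the substantive content analogous to the second half of \cref{prop:starenumiffenum}) is \emph{not} claimed in this proposition, and indeed would require more work in the positive setting since the trick used there relied on being able to discuss compatibility with partial atomic diagrams.
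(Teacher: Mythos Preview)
Your proposal is correct and follows essentially the same approach as the paper: both keep $\Psi$ unchanged and define $\Phi_*$ by searching through the enumeration of $\Psi_*^{P(\A)\oplus Graph(f)\oplus P(\B)}$ for a pair with first coordinate $x$, relying on totality of $F(f)$ for termination. Your additional remarks about why the converse is not claimed are accurate as well (the paper establishes its failure in the subsequent theorem).
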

\begin{proof}
  Let $F:\A\ra \B$ be given by $(\Psi,\Psi_*)$ and let $f:\tilde\A\cong
  \hat\A$ for $\tilde\A\cong\hat\A\cong \A$. Now we can define a procedure
  computing $F(f)$ as follows. Given $x$, and $\tilde\A\oplus f\oplus\hat \A$
  enumerate $\Psi_*^{\tilde\A \oplus f\oplus \hat \A}$ until $\langle
  x,y\rangle\searrow
  \Psi_*^{\tilde\A \oplus f\oplus \hat \A}$ for some $y$. This is uniform in
  $\tilde\A\oplus f\oplus\hat\A$ and thus there exists a Turing operator $\Phi_*$ with
  this behaviour. The pair $(\Psi,\Phi_*)$ then witnesses that $F$ is
  $\star$-enumerable.
\qed\end{proof}
\begin{theorem}
  There is positive $\star$-enumerable functor that is not enumeration
  isomorphic to any positive enumerable functor.
\end{theorem}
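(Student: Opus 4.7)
The plan is to first reduce the theorem to exhibiting a positive $\star$-enumerable functor that fails to be positive enumerable. For this reduction, suppose $F$ is positive $\star$-enumerable and enumeration isomorphic to a positive enumerable $G$ via an enumeration operator $\Lambda$. The naturality condition gives $F(f) = (\Lambda^{P(\hat{\A})})^{-1} \circ G(f) \circ \Lambda^{P(\tilde{\A})}$. The graph of $\Lambda^{P(\tilde{\A})}$ is enumerable from $P(\tilde{\A})$, and since $\Lambda^{P(\tilde{\A})}$ is a bijection of $\omega$ its inverse is enumerable as well (swap coordinates in the enumerated pairs). Composing existentially with $Graph(G(f))$, which is enumerable from $P(\tilde{\A}) \oplus Graph(f) \oplus P(\hat{\A})$, yields the enumerability of $Graph(F(f))$; an analogous existential argument shows $P(F(\tilde{\A})) \leq_e P(\tilde{\A})$, so $F$ is already positive enumerable. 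Thus it is enough to produce a positive $\star$-enumerable functor that is not positive enumerable.

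For the construction, let $\A = (\omega; U)$ with $U^{\A} = K$ the halting set, and define $F: Iso(\A) \to Iso(\A)$ by $F(\tilde{\A}) = \tilde{\A}$ on objects and the twisted identity $F(f) = c_{\hat{\A}} \circ f \circ c_{\tilde{\A}}^{-1}$ on morphisms, where $c_{\tilde{\A}}$ is the involution swapping the two least elements of $\omega \setminus U^{\tilde{\A}}$ and fixing all other elements. Functoriality is a routine check, and $c_{\tilde{\A}}$ is an automorphism because it permutes within the complement of $U^{\tilde{\A}}$. The functor is positive $\star$-enumerable: the object part is the identity on positive diagrams, and on morphisms a Turing operator reads the characteristic functions of $U^{\tilde{\A}}, U^{\hat{\A}}$ from $P(\tilde{\A}), P(\hat{\A})$, finds the two least elements of each complement, computes $c_{\tilde{\A}}, c_{\hat{\A}}$, and assembles $F(f)$.

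To show $F$ is not positive enumerable, suppose toward a contradiction that some $\Psi_*$ satisfies $\Psi_*^{P(\tilde{\A}) \oplus Graph(f) \oplus P(\hat{\A})} = Graph(F(f))$. Writing $\omega \setminus K = \{b_0 < b_1 < b_2 < \cdots\}$, let $f_0$ be the swap automorphism of $\A$ exchanging $b_0$ and $b_2$; a short calculation with $c_{\A} = (b_0, b_1)$ shows $F(f_0) = (b_1, b_2)$. Hence $\Psi_*$ enumerates $(b_1, b_2)$ via some axiom whose finite positive part $D$ is contained in $P(\A) \oplus Graph(f_0) \oplus P(\A)$. The diagonalization now modifies the structure: set $\tilde{\A}' = \hat{\A}' = (\omega; K \cup \{b_0, b_2\})$, still isomorphic to $\A$ since its complement remains infinite, and take $f' = f_0$ regarded as an automorphism of $\tilde{\A}'$ (valid because $b_0, b_2$ are now in $U^{\tilde{\A}'}$). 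In $\tilde{\A}'$ the two least elements of the complement of $U^{\tilde{\A}'}$ are $b_1$ and $b_3$, so $c_{\tilde{\A}'} = (b_1, b_3)$; being disjoint from $f' = (b_0, b_2)$, they commute, giving $F(f') = f'$ and hence $(b_1, b_2) \notin Graph(F(f'))$. Every positive fact in $D$ nevertheless persists in $P(\tilde{\A}') \oplus Graph(f') \oplus P(\tilde{\A}')$, since we only enlarged $U$ and kept $Graph(f)$ unchanged, so $\Psi_*$ is forced to enumerate the erroneous pair, a contradiction. The main obstacle is precisely this diagonalization: the twist $c$ must depend on negative information about $U$, while the modification $\tilde{\A}'$ must destroy that information without violating any positive axiom triggered on the original input, and promoting complement elements into $U$ strikes exactly this balance.
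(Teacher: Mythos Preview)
Your proof is correct and takes a genuinely different route from the paper's.

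The paper builds two distinct structures: a graph $\A$ coding $\emptyset'$ via cycles (so that $P(\hat\A)$ Turing-computes $\emptyset'$ but does not enumeration-reduce $\bar\emptyset'$), and a rigid graph $\B$ with degree of categoricity $\mathbf 0'$, arranged so that the unique isomorphism between two specific copies $\B_1,\B_2$ has graph $\geq_e \bar\emptyset'$. The functor sends copies of $\A$ to either $\B_1$ or $\B_2$ depending on a parity bit, and the obstruction is purely degree-theoretic: the morphism part would have to enumerate $\bar\emptyset'$ from data of degree $\mathbf 0'_e$. The ``not enumeration isomorphic'' step is then carried out for this particular $F$.

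You instead isolate the reduction in full generality up front (enumeration isomorphic to positive enumerable implies positive enumerable, via uniform pullback along $\Lambda$), and then give a very economical counterexample: an endofunctor on $Iso((\omega;K))$ that is the identity on objects and conjugates morphisms by the involution swapping the two least elements of the complement of $U$. The failure of positive enumerability is witnessed by a direct finitary diagonalization exploiting the monotonicity of enumeration operators: enlarging $U$ by $\{b_0,b_2\}$ preserves every positive axiom used but changes which elements are ``the two least non-$U$ elements'', hence changes the conjugating involution.

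Your approach is more elementary and self-contained; it avoids the machinery of categoricity degrees and the auxiliary rigid structure, and it makes the role of negative information completely explicit. The paper's approach, on the other hand, ties the phenomenon to a familiar degree-theoretic separation ($\mathbf 0'_e$ versus $\bar{\mathbf 0'}_e$) and to the existing notion of degree of categoricity, which situates the result in a broader context.
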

\begin{proof}
  We will build two structures $\A$ and $\B$ such that there is a positive
  $\star$-enumerable functor $F:\A\ra\B$ that is not positive enumerable.
  The structure $\A$ is a graph constructed as follows. It has a vertex $a$
  with a loop connected to $a$ and a cycle of size $n$ for every natural number
  $n$. If $n\in\emptyset'$ then there is an edge between $a$ and one element of
  the $n$ cycle, otherwise there is no such edge. Clearly,
  $deg_T(P(\A))=\mathbf 0'$ and $P(\A)\not\geq_e \bar\emptyset'$.

  The structure $\B$ is a typical graph that
  witnesses that there is a structure with degree of categoricity $\mathbf 0'$
  (that is, $\mathbf 0'$ is the least degree computing an isomorphism between
  any two computable copies of $\B$).
  Let us recap how we build two copies of $\mc B$, $\mc B_1$ and $\mc B_2$
  such that $\mathbf 0'$ is the least degree computing isomorphism between $\mc
  B_1$ and $\mc B_2$. Both graphs consist of an infinite ray with a loop at
  its first element. Let $v_i$ be the $i^{th}$ element in the ray in $\mc B_1$
  and $\hat v_i$ be the $i^{th}$ element in the ray in $\mc B_2$. Now for every
  $v_i$ there are two elements $a_i$ and $b_i$ with $v_iEa_i$ and $v_iE
  b_i$. Likewise for every $\hat v_i$ there are two elements $\hat a_i$ and
  $\hat b_i$ with $\hat v_iE\hat a_i$ and $\hat v_iE \hat b_i$. Furthermore
  there are additional vertices $s_i$, $\hat s_i$ with $a_iE s_i$ and $\hat
  a_iE \hat s_i$.

  Take an enumeration of $\emptyset'$. If $i\searrow \emptyset'$, then add
  vertices $b_iE
  \cdot E\cdot$ and $\hat s_i E \cdot$, $\hat b_i E \cdot$. This finishes the construction of $\B$. It is not hard
  to see that there is a unique isomorphism $f:\B_1\ra \B_2$ and that
  $deg(f)=\mathbf 0'$ and $Graph(f)\geq_e \bar\emptyset'$. 

  We now construct the functor $F$. Given an enumeration of $P(\hat \A)$
  for $\hat \A\cong \A$ we wait until we see the cycle containing $0$ (any
  natural number would work). If it
  is of even length, or $0$ is the special vertex $a$, we let $F(\hat\A)=\B_1$ and if it is
  of odd length we let $F(\hat\A)=\B_2$. Clearly given any
  enumeration of a copy of $\A$ this procedure produces an enumeration of
  a copy of $\B$.

  As $\B$ is rigid we just let $F(f:\hat\A\ra\tilde
  \A)=g:F(\hat\A)\ra F(\tilde \A)$ where $g$ is the unique isomorphism between
  $F(\hat \A)$ and $F(\tilde \A)$. Note that there is a Turing operator
  $\Theta$ such that $\Theta^{P(\hat\A)}=\emptyset'$ for any $\hat\A\cong \A$
  and that the isomorphism between $F(\hat\A)$ and $F(\tilde \A)$ can be
  computed uniformly from $P(F(\hat\A))\oplus P(F(\tilde\A))\oplus \emptyset'$.
  Thus, there is an operator
  $\Phi_*$ witnessing that $F$ is positive $\star$-enumerable.

  To see that $F$ is not positive enumerable consider two copies $\hat \A$ and
  $\tilde \A$ of $\A$ with $deg_e(P(\hat \A))=deg_e(P(\tilde\A))=\mathbf 0'_e$ such
  that $0$ is part of an even cycle in $\hat \A$ and part of an odd cycle in
  $\tilde \A$.
  Notice that there is $f:\hat\A\ra\tilde\A$ such that $P(\hat\A)\oplus
  P(\tilde\A)\geq_e P(\hat\A)\oplus
  Graph(f:\hat\A\ra\tilde\A)\oplus P(\tilde \A)$, and also that
  $P(\hat\A)\oplus P(\tilde
  \A)\not\geq_e \bar \emptyset'$. But $Graph(g:F(\hat\A)\ra F(\tilde\A))\geq_e
  \bar \emptyset'$ as $F(\hat \A)=\B_1$ and $F(\tilde\A)=\B_2$. Thus there can not be an enumeration operator witnessing
  that $F$ is positive enumerable.
  
  Assume $F$ was enumeration isomorphic to a positive
  enumerable functor $G$ and that this isomorphism is witnessed by
  $\Lambda$. Then, taking $\hat \A$, $\tilde\A$ and $f:\hat \A\ra \tilde \A$ as
  in the above paragraph we have that
  $P(\hat\A)\oplus P(\tilde\A)\geq_e Graph(G(f))$. But then
  \[P(\hat\A)\oplus P(\tilde\A)\geq_e Graph(\Lambda^{P(\hat\A)}\circ G(f)\circ {\Lambda^{P(\tilde
  \A)}}^{-1})=Graph(F(f))\geq_e \bar\emptyset'.\]
This is a contradiction since $deg_e(P(\hat\A)\oplus P(\tilde\A))=\mathbf 0'_e$.
\qed\end{proof}
\section{Reductions between arbitrary classes}
So far we have seen how we can compare structures with respect to computability
theoretic properties. Our notions can be naturally extended to allow the
comparison of arbitrary classes of structures.
\begin{definition}[\cite{harrison-trainor2017}]
  Let $\mf C$ and $\mf D$ be classes of structures. The class $\mf C$ is
  \emph{uniformly computably transformably reducible}, short u.c.t.\
  reducible, to $\mf D$ if there are a subclass $\mf D'\subseteq\mf D$ and computable functors $F:\mf C\ra \mf
  D'\subseteq \mf D$ and $G:\mf D'\ra \mf C$ such that $F$ and $G$ are
  pseudo-inverses.
\end{definition}
\begin{definition}
  Let $\mf C$ and $\mf D$ be classes of structures. The class $\mf C$ is
  \emph{uniformly (positive) enumerably transformably reducible}, short u.e.t.,
  (u.p.e.t.)
  reducible, to $\mf D$ if there is a subclass $\mf D'\subseteq\mf D$ and
  (positive) enumerable functors $F:\mf C\ra \mf
  D'\subseteq \mf D$ and $G:\mf D'\ra \mf C$ such that $F$ and $G$ are
  pseudo-inverses.
\end{definition}
\Cref{prop:compfunctors_noeSp,prop:posenumnotcomp} show that u.p.e.t.\ and
u.c.t\ reductions are independent notions.
\begin{corollary}
  There are classes of structures $\mf C_1,\mf C_2$ and $\mf D_1,\mf D_2$ such
  that
  \begin{enumerate}
    \tightlist%
  \item $\mf C_1$ is u.c.t.\ reducible to $\mf D_1$ but $\mf C_1$ is not
    u.p.e.t.\ reducible to $\mf D_1$.
  \item $\mf C_2$ is u.p.e.t.\ reducible to $\mf D_2$ but $\mf C_2$ is not
    u.c.t.\ reducible to $\mf D_2$.
  \end{enumerate}
\end{corollary}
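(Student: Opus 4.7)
The plan is to take the two classes in each case to be single isomorphism classes realized by the structures already constructed in \cref{prop:compfunctors_noeSp,prop:posenumnotcomp}, and then observe that u.c.t.\ (resp.\ u.p.e.t.) reducibility between such singleton classes is the same as computable (resp.\ positive enumerable) bi-transformability of the underlying structures.

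For part (1), I would let $\A$ and $\B$ be the two structures from \cref{prop:compfunctors_noeSp} and set $\mf C_1 = Iso(\A)$ and $\mf D_1 = Iso(\B)$. Taking $\mf D' = \mf D_1$, the computable pseudo-inverse functors witnessing that $\A$ and $\B$ are computably bi-transformable immediately witness that $\mf C_1$ is u.c.t.\ reducible to $\mf D_1$. For the negative direction, suppose $\mf C_1$ were u.p.e.t.\ reducible to $\mf D_1$ via positive enumerable pseudo-inverses $F\colon \mf C_1 \ra \mf D'$ and $G\colon \mf D' \ra \mf C_1$. Since $\mf D_1 = Iso(\B)$ consists of a single isomorphism type and $\mf C_1$ is nonempty, $\mf D'$ must be all of $\mf D_1$, so $F$ and $G$ restrict to a positive enumerable bi-transformation between $\A$ and $\B$, contradicting \cref{prop:compfunctors_noeSp}.

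Part (2) is handled symmetrically: let $\A$ and $\B$ be the structures from \cref{prop:posenumnotcomp}, set $\mf C_2 = Iso(\A)$ and $\mf D_2 = Iso(\B)$, and use the positive enumerable bi-transformation given there to obtain a u.p.e.t.\ reduction (again with $\mf D' = \mf D_2$). If $\mf C_2$ were u.c.t.\ reducible to $\mf D_2$, the same singleton argument would produce a computable bi-transformation between $\A$ and $\B$, contradicting \cref{prop:posenumnotcomp}.

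There is no real obstacle in the argument; the only point requiring a brief word is the observation that when $\mf D_i$ is a single isomorphism class and $\mf C_i$ is nonempty, any choice of nonempty subclass $\mf D' \subseteq \mf D_i$ must be $\mf D_i$ itself, so that a purported u.c.t.\ or u.p.e.t.\ reduction collapses to a bi-transformation between the underlying structures in the sense of \cref{def:compbitrans} and \cref{def:posenumbitrans} respectively.
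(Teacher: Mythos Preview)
Your proposal is correct and is exactly the argument the paper has in mind: the corollary is stated immediately after \cref{prop:compfunctors_noeSp,prop:posenumnotcomp} with no separate proof, so taking $\mf C_i$ and $\mf D_i$ to be the isomorphism classes of the structures built there is the intended instantiation. Your remark that a nonempty subclass $\mf D'$ of a single isomorphism class must be the whole class (classes here being isomorphism-closed by the conventions of the paper) is precisely the small observation needed to pass from a putative u.c.t.\ or u.p.e.t.\ reduction back to a bi-transformation and invoke the propositions.
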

Similar to \cref{cor:bitransenumiffcomp} we obtain the equivalence of u.e.t.\ and u.c.t\ reductions.
\begin{corollary}
  Let $\mf C$ and $\mf D$ be arbitrary classes of countable structures. Then
  $\mf C$ is u.e.t.\ reducible to $\mf D$ if and only if it is u.c.t.\ reducible
  to $\mf D$.
\end{corollary}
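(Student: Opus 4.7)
The plan is to mimic the proof of Corollary~\ref{cor:bitransenumiffcomp}, applying the already-established equivalence between enumerable and computable functors uniformly across the class. The key observation is that in both the u.c.t.\ and u.e.t.\ definitions the pseudo-inverse data consists of the same Turing functionals $\Lambda_\mf C, \Lambda_\mf D$; only the type of operator witnessing effectiveness of $F$ and $G$ changes. So if I can move between enumerable and computable witnesses without altering the underlying functors, the pseudo-inverse diagram survives automatically.

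For u.c.t.~$\Rightarrow$~u.e.t., given computable pseudo-inverse functors $F:\mf C\ra\mf D'$ and $G:\mf D'\ra\mf C$, I would apply \cref{thm:compimpenum} to each, producing enumerable witnesses for the very same functors $F$ and $G$. For u.e.t.~$\Rightarrow$~u.c.t., I would run the converse direction (established in~\cite{rossegger2017} and implicit in \cref{thm:enumiffcomp}) uniformly over the class. Concretely, given enumerable witnesses $(\Psi,\Psi_*)$ for $F$, I would extract Turing operators $(\Phi,\Phi_*)$ as follows: to decide whether $x\in D(F(\hat\A))$ from oracle $D(\hat\A)$, enumerate $\Psi^{D(\hat\A)}$ until either $x$ or the dual code witnessing its negation appears; this terminates because $D(F(\hat\A))$ is total. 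To compute $F(f)(x)$, enumerate $\Psi_*^{D(\hat\A)\oplus Graph(f)\oplus D(\tilde\A)}$ until some pair $\langle x,y\rangle$ appears, which terminates because $F(f):\omega\ra\omega$ is total. The same conversion applied to $G$ yields computable witnesses, and again the pseudo-inverse data carries over unchanged.

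I do not anticipate any genuine obstacle. The only point worth verifying is that each enumerable-to-computable conversion is uniform in $\hat\A\in\mf C$, which is immediate because $(\Psi,\Psi_*)$ is a single pair of enumeration operators to begin with, so the extracted $(\Phi,\Phi_*)$ is a single pair of Turing operators valid on the whole class. Since the Turing functionals $\Lambda_\mf C,\Lambda_\mf D$ are never touched, the pseudo-inverse conditions transfer verbatim in both directions.
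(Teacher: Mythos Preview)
Your proposal is correct and follows essentially the same approach as the paper, which presents the corollary without proof as a direct analogue of \cref{cor:bitransenumiffcomp}. Your observation that the pseudo-inverse data (the Turing functionals $\Lambda_{\mf C},\Lambda_{\mf D}$) is literally the same in both definitions, so that one only needs to swap the effectiveness witnesses for $F$ and $G$, is exactly the point; your explicit description of the enumerable-to-computable conversion (waiting for $x$ or its dual in $D(F(\hat\A))$, and waiting for $\langle x,y\rangle$ in $Graph(F(f))$) is a clean self-contained argument where the paper would instead invoke \cite{rossegger2017} via \cref{thm:enumiffcomp}.
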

\printbibliography
\end{document}